\newtheorem{theorem}{Theorem}[section]
\newtheorem{corollary}[theorem]{Corollary}
\newtheorem{lemma}[theorem]{Lemma}
\newtheorem{proposition}[theorem]{Proposition}
\theoremstyle{definition}
\newtheorem{definition}[theorem]{Definition}
\newtheorem{example}[theorem]{Example}
\theoremstyle{remark}
\numberwithin{equation}{section}
\DeclareMathOperator{\g}{grad}
\begin{document}
\title{strict stability of calibrated cones}
\author{Bryan Dimler and Jooho Lee}

\address{Department of Mathematics\\
University of California, Irvine\\
CA, 92617}
\email{bdimler@uci.edu}

\address{Department of Mathematics\\
University of California, Irvine\\
CA, 92617}
\email{joohol3@uci.edu}

\begin{abstract}
    We study the strict stability of calibrated cones with an isolated singularity. Special Lagrangian cones are proved to be strictly stable when $n \geq 5$, and when $n = 4$ if the link is simply connected. All coassociative cones are shown to be strictly stable. In the complex case, we give non-strictly stable examples. 
\end{abstract}

\date{}

\maketitle

 \section{Introduction}

Let $C$ be an embedded $n$-dimensional minimal cone in $\mathbb{R}^m$ that is smooth away from an isolated singularity at the origin, let $\mathcal{A}$ be its corresponding area functional, and let $L$ be the stability operator on $C$. The cone $C$ is said to be stable if the second variation of $\mathcal{A}$ is nonnegative with respect to smooth normal variations which are compactly supported in $C$ (i.e. $\mathcal{A}^{\prime \prime}(0) \geq 0$). We say that $C$ is strictly stable if $L$ (identified with $-L$) is positive definite (see \eqref{sstable}). Thus, strict stability corresponds to a strong analytic condition giving a quantitative lower bound on the spectrum of $L$. 

In the context of calibrated cones, particularly special Lagrangian and coassociative cones, the notion of stability has taken on a different meaning and is important in the deformation theory (e.g. see \cite{Ha3, Jo2, L2, Ohn}). However, the notions of stability we use are classical and have played a fundamental role in the regularity theory for minimal hypersurfaces (\cite{B, CHS, HS, SS, Si}) since singular points of minimal submanifolds are conical at small scales (\cite{G, Sim}). In codimension one, many examples of (strictly) stable minimal cones have been discovered (\cite{CHS, Law, Li, O, Se, Si}). While many examples of calibrated (hence, stable) high codimension cones have been discovered (\cite{HL, Ha1, Ha2, Ha4, Jo1, Jo2, Mc}), to the authors' knowledge little is known regarding their strict stability.

In this paper, we prove the strict stability of special Lagrangian and coassociative calibrated cones with an isolated singularity. Precisely, we show that all special Lagrangian cones are strictly stable when $n \geq 5$, as well as when $n = 4$ provided the link of the cone is simply connected. Coassociative cones will be shown to always be strictly stable. We also provide examples of complex calibrated cones which are stable, but not strictly stable. 

\section*{Acknowledgments}
The authors would like to express their sincere gratitude to Rick Schoen and Connor Mooney for bringing this question to their attention and for several enlightening discussions on the subject of this paper. B. Dimler was partially supported by NSF grant DMS-2143668 and C. Mooney's Sloan and UCI Chancellor's Fellowships. J. Lee was partially supported by NSF Grant DMS-2005431. Part of this work was done while the authors were visiting the Simons Laufer Mathematical Sciences Institute with funding from NSF Grant DMS-1928930. The authors gratefully acknowledge the support of the SLMath.

\section{Preliminaries}

\subsection{Definitions}
Let $M$ be a smooth embedded $n$-dimensional Riemannian submanifold of $\mathbb{R}^{m}$ (with or without boundary) with the induced metric $\langle \cdot, \cdot \rangle$, where $m \geq n+1$. We will denote the normal bundle and tangent bundles of $M$ by $NM$ and $TM$. The space of smooth normal vector fields will be denoted by $\Gamma(NM)$, and we will write $\Gamma_c(NM)$ for the space of smooth normal vector fields with compact support in $M$. The space $\Gamma(TM)$ will represent the space of smooth tangent vector fields.

 We will denote the connections on $TM$ and $NM$ by $\nabla$ and $\nabla^\perp$, respectively. Let $A$ be the \emph{second fundamental form} of $M$. The \emph{mean curvature vector} $H$ on $M$ is the trace of $A$, and $M$ is a \emph{minimal submanifold} of $\mathbb{R}^m$ if $H \equiv 0$. We will write $|A|^2$ for length squared of the second fundamental form. The \emph{normal Laplacian} $\Delta^\perp$ on $\Gamma(NM)$ is given in a smooth local orthonormal frame $E_1,\ldots, E_n$ for $TM$ by
\begin{equation*}
    \Delta^\perp V = \sum_{i = 1}^n \big(\nabla^\perp_{E_i} \nabla^\perp_{E_i} V - \nabla^\perp_{\nabla_{E_i}E_i}V\big), \label{nlaplace}
\end{equation*}
and $L^2(NM)$ will denote the Hilbert space of Borel measurable sections of $NM$ that are square integrable with respect to the $n$-dimensional Hausdorff measure $\mathscr{H}^n$ on $M$.

\subsection{Stable Minimal Cones}
We will work with cones over smooth compact embedded submanifolds of $\mathbb{S}^{m-1}$. Let $\Sigma$ be a smooth compact embedded ($n-1$)-dimensional submanifold of $\mathbb{S}^{m-1}$ and let $C$ be the cone over $\Sigma$ with vertex at the origin. Set $r := |x|$ and $\sigma := |x|^{-1}x \in \mathbb{S}^{m-1}$. We will identify $C$ with  $C \setminus \{0\}$:
\begin{equation}
      C := \{r\sigma : \sigma \in \Sigma, \text{ } r \in (0,\infty)\}. \label{cone1}
\end{equation}
The cone $C$ is an $n$-dimensional embedded submanifold of $\mathbb{R}^m$ that has an isolated singularity at the origin, provided $\Sigma$ is not the totally geodesic $(n-1)$-sphere embedded in $\mathbb{S}^{m-1}$. In the latter case, $C$ is a piece of an $n$-dimensional plane in $\mathbb{R}^{m}$ passing through the origin. The cone $C$ is a minimal submanifold of $\mathbb{R}^m$ if and only if $\Sigma$ is a minimal submanifold of $\mathbb{S}^{m-1}$ (\cite{CHS, Si}).

Suppose $C$ is minimal in $\mathbb{R}^m$. If $V \in \Gamma_c(NC)$, then the \emph{second variation of area} if we vary in the direction $V$ is defined by the quadratic form
\begin{equation}
    Q(V,V) := \int_C \big(|\nabla^\perp V|^2 - \langle V, \tilde{A}(V) \rangle\big) = - \int_C \langle V ,\Delta^\perp V + \tilde{A}(V)\rangle, \label{jacobi}
\end{equation}
where the second equality follows from integration by parts and $\tilde{A}$ is \emph{Simons' operator} on $NC$. Let $E_1,\ldots, E_n$ be a smooth local orthonormal frame for $TC$. Then 
\begin{equation*}
    \tilde{A}(V) = \sum_{i,j = 1}^n\langle A(E_i,E_j) , V\rangle A(E_i,E_j).
\end{equation*}
From the expression above, it is easy to see that $\tilde{A}$ is symmetric with respect to $\langle \cdot, \cdot \rangle$ at each point $p \in C$. Set $L := \Delta^\perp + \tilde{A}$. The operator $L: \Gamma(NC) \rightarrow \Gamma(NC)$ is called the \emph{stability operator} (i.e. \emph{Jacobi operator}) on $C$. A section $V \in \Gamma(NC)$ is called a \emph{Jacobi field} if $LV = 0$.

For a given minimal cone $C$, define $C_1$ to be the intersection of $C$ with the closed unit ball (i.e. \eqref{cone1} with $r \in (0,1]$). The cone $C$ is said to be \emph{stable} if $Q(V,V) \geq 0$ for every $V \in \Gamma_c(NC_1)$ that vanishes on $\Sigma$, where the integral in \eqref{jacobi} is taken over $C_1$. We say that $C$ is \emph{strictly stable} if 
\begin{equation}
    \inf\Bigg\{-\frac{\int_{C_1} \langle V, LV \rangle}{\int_{C_1} |V|^2 r^{-2}} : V \in \Gamma_c(NC_1), \text{ } V \not\equiv 0, \text{ and } V = 0 \text{ on } \Sigma \Bigg\} > 0. \label{sstable}
\end{equation}
The weighted Rayleigh quotient \eqref{sstable} is chosen to preserve scaling. 

\subsection{Characterization of Stability}
Let $C$ be an $n$-dimensional embedded minimal cone in $\mathbb{R}^m$ with smooth link $\Sigma$. We extend the spectral characterization of stability for codimension one minimal cones to high codimension minimal cones in $\mathbb{R}^m$ (see \cite{CHS, Si, Sm}) following the presentation in \cite{Si}.

Fix $\sigma \in \Sigma$ and let $e_1,\ldots,e_{n-1}$ be an orthonormal basis for $T_\sigma \Sigma$. Extend the $e_i$ to a smooth local orthonormal frame $E_1,\ldots,E_{n-1}$ for $T\Sigma$ near $\sigma$, chosen so that the $E_i$ are covariant constant at $\sigma$ with respect to the connection on $T\Sigma$. Let $E_r := \pdv{r}$ be the unit radial vector in $\mathbb{R}^{m}$ and choose $N_1,\ldots, N_{m-n} \in \Gamma(N\Sigma)$ such that $N_1,\ldots, N_{m-n}$ is a local orthonormal frame for $N\Sigma$ near $\sigma$. Extend $E_1,\ldots,E_{n-1}, N_1,\ldots, N_{m-n}$ to $TC$ and $NC$ by parallel translation along the radial paths from points on $\Sigma$ through the origin. In the chosen coordinates,
\begin{equation}
    A_C(E_i,E_j) = r^{-1}A_\Sigma(E_i,E_j) \text{ for each } i,j = 1,\ldots, n-1, \label{scale}
\end{equation}
where $A_C$ and $A_\Sigma$ are the second fundamental forms for $C$ and $\Sigma$, respectively. Using that $\Sigma$ is minimal in $\mathbb{S}^{m-1}$ and the definition of the $E_i$ via parallel transport, we obtain the polar coordinate expression
\begin{equation}
    L = \pdv[2]{r} + \frac{n-1}{r} \pdv{r}  +\frac{1}{r^2} \big( \Delta_\Sigma^\perp +  \tilde{A}_\Sigma\big). \label{pstab}
\end{equation}
Note that, by the Cauchy-Schwarz inequality, for any $V\in \Gamma(NC)$
\begin{equation}
    |\tilde{A}_C(V)| \leq r^{-2}|A_\Sigma|^2|V| \label{simonsbdd}
\end{equation}
and $|A_\Sigma|^2$ is a bounded function.

For any $\epsilon \in (0,1)$, we denote by $C_{\epsilon,1}$ the truncated cone
\begin{equation*}
    C_{\epsilon,1} := \{r\sigma : \sigma \in \Sigma, \text{ } r \in [\epsilon,1]\}
\end{equation*}
and define the weighted $L^2$ space $L_r^2(NC_{\epsilon,1})$ by
$$
    L_r^2(NC_{\epsilon,1}) := L^2(NC_{\epsilon,1};r^{-2}d\mathscr{H}^n).
$$
The truncated cone $C_{\epsilon,1}$ is a smooth embedded compact minimal submanifold of $\mathbb{R}^m$ with boundary for each $\epsilon$. Define $r^2 L: \Gamma(NC_{\epsilon,1}) \rightarrow \Gamma(NC_{\epsilon,1})$. Then $r^2 L$ has smooth coefficients which are uniformly bounded independent of $\epsilon$ due to \eqref{scale} and \eqref{simonsbdd}. Since $L: \Gamma(NC_{\epsilon,1}) \rightarrow \Gamma(NC_{\epsilon,1})$ is strongly elliptic and $L^2(NC_{\epsilon,1})$ self-adjoint when restricted to sections vanishing on $\partial C_{\epsilon,1}$ (\cite{Si}, Propositions 1.2.2 and 1.2.3), $r^2 L$ is a strongly elliptic $L_r^2(NC_{\epsilon,1})$ self-adjoint operator. Therefore, the spectrum of $r^2L$ consists of a discrete set of eigenvalues which are bounded from below:
$$
    \lambda_1 \leq \lambda_2 \leq \lambda_3 \leq \cdots
$$
where $\lambda_i := \lambda_i(\epsilon)$ and 
$$
    r^2L V_{i} + \lambda_i V_{i} = 0
$$
has some nontrivial solution $V_{i} \in \Gamma(NC_{\epsilon,1})$ ($V_i := V_i(\epsilon))$ satisfying $V_i = 0$ on $\partial C_{\epsilon,1}$. The normal sections $V_{i}$ are the eigensections for $r^2L$ corresponding to $\lambda_i$, and can be chosen to be an orthonormal basis for $L_r^2(NC_{\epsilon,1})$.

Set 
\begin{equation}
    d_0(C_1) := \inf_{\epsilon \in (0,1)} \lambda_1(\epsilon) \label{dnot}
\end{equation}
and let $L_\Sigma := \Delta_\Sigma^\perp + \tilde{A}_\Sigma$. As before, $L_\Sigma: \Gamma(N\Sigma) \rightarrow \Gamma(N\Sigma)$ is a $L^2(N\Sigma)$ self-adjoint strongly elliptic operator so there is a discrete set of eigenvalues for $L_\Sigma$ increasing to $+\infty$
$$
    \mu_1 \leq \mu_2 \leq \mu_3 \leq \cdots
$$
with corresponding orthonormal basis of smooth eigensections for $L^2(N\Sigma)$
$$
    V_1, V_2, V_3 \ldots, \text{ where } V_j \in \Gamma(N\Sigma) \text{ for each } j \in \mathbb{N}.
$$
We prove a formula for $d_0(C_1)$ in terms of the dimension $n$ of $C$ and $\mu_1$, and show that the value of $d_0(C_1)$ determines the stability of $C$. 

Define the operator $T$ on $C^\infty([0,1])$ by
$$
    T := r^2 \pdv[2]{r} + (n-1)r \pdv{r}.
$$
Diagonalizing $T$ in $C_{0}^{\infty}([\epsilon, 1])$ as in \cite{Si} for each $\epsilon \in (0,1)$, we obtain a sequence of eigenfunctions $\phi_{i} \in C_{0}^{\infty}([\epsilon, 1])$ with corresponding eigenvalues
$$
    \gamma_i(\epsilon) := \frac{(n-2)^2}{4} + \Big(\frac{i\pi}{\log \epsilon}\Big)^2.
$$
We will write $\gamma_i$ for simplicity. After scaling, the $\phi_{i}$ are orthonormal with respect to the weighted $L^2$ inner product 
$$
    (\phi, \psi ) := \int_\epsilon^1 \phi \cdot \psi r^{n - 3} \, dr.
$$
Combining the $L^2$ expansions for $L_\Sigma$ and $T$ shows that any $V \in \Gamma(NC_{\epsilon,1})$ satisfying $V = 0$ on $\partial C_{\epsilon,1}$ has a unique eigensection expansion
\begin{equation*}
    V(r\sigma) = \sum_{i,j= 1}^\infty a^{ij} \phi_{i}(r) V_j(\sigma) 
\end{equation*}
for constants $a^{ij}:= a^{ij}(\epsilon)$.

The computations above lead to a characterization of stability for minimal cones with an isolated singularity. This result was proved in \cite{CHS, Si} for minimal hypercones. In \cite{CHS}, the authors also state that their proof works in higher codimension. For completeness, we briefly sketch the proof in high codimension following the arguments in Section 6 of \cite{Si}. 

\begin{proposition}\label{stabcondition}
    Suppose $C$ is an $n$-dimensional minimal cone in $\mathbb{R}^m$. 
    \begin{enumerate}
        \item $C$ is stable (strictly stable) if and only if $d_0(C_1) \geq 0$ $(>0)$.
        \item We have
        \begin{equation}
            d_0(C_1) = \frac{(n-2)^2}{4} + \mu_1. 
        \end{equation}
    \end{enumerate}
\end{proposition}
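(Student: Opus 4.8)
The plan is to reduce both parts to an analysis of the first Dirichlet eigenvalue $\lambda_1(\epsilon)$ of $r^2 L$ on the truncated cone $C_{\epsilon,1}$, exploiting the separated form $r^2 L = T + L_\Sigma$ coming from \eqref{pstab}. The two ingredients are a variational identification of the weighted Rayleigh quotient in \eqref{sstable} with $d_0(C_1)$, and an explicit diagonalization of $r^2 L$ through the product eigensections $\phi_i(r)V_j(\sigma)$.

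For part (1), I would first observe that the numerator $-\int_{C_1}\langle V, LV\rangle$ in \eqref{sstable} is precisely $Q(V,V)$ by \eqref{jacobi}, since $L = \Delta^\perp + \tilde{A}$; thus $C$ is stable exactly when the infimum in \eqref{sstable} is $\geq 0$ and strictly stable exactly when it is $> 0$, and it suffices to prove that this infimum equals $d_0(C_1)$. Because elements of $\Gamma_c(NC_1)$ that vanish on $\Sigma$ have support bounded away from the vertex, each admissible $V$ lies in $\Gamma(NC_{\epsilon,1})$ and vanishes on $\partial C_{\epsilon,1}$ once $\epsilon$ is small. Rewriting the weighted quotient as $-\langle r^2 L V, V\rangle_{L_r^2}/\|V\|_{L_r^2}^2$ and invoking the variational characterization of the first eigenvalue of the $L_r^2(NC_{\epsilon,1})$-self-adjoint operator $r^2 L$, the quotient of any such $V$ is at least $\lambda_1(\epsilon) \geq d_0(C_1)$, giving one inequality. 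For the reverse, the first eigensection $V_1(\epsilon)$ vanishes on $\partial C_{\epsilon,1}$ and, extended by zero, is an admissible competitor (after a routine approximation by smooth compactly supported sections, using density of $\Gamma_c$ in $H_0^1$) whose quotient is exactly $\lambda_1(\epsilon)$; taking the infimum over $\epsilon$ yields the matching inequality. Hence the infimum in \eqref{sstable} equals $d_0(C_1)$, proving (1).

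For part (2), I would diagonalize $r^2 L$ by separation of variables. Using \eqref{pstab} in the form $r^2 L = T + L_\Sigma$ together with $T\phi_i = -\gamma_i \phi_i$ and $L_\Sigma V_j + \mu_j V_j = 0$, one computes $r^2 L(\phi_i V_j) = -(\gamma_i(\epsilon) + \mu_j)\phi_i V_j$, so each product is an eigensection with eigenvalue $\gamma_i(\epsilon) + \mu_j$. The measure factorizes as $r^{-2}\,d\mathscr{H}^n = r^{n-3}\,dr\,d\mathscr{H}^{n-1}_\Sigma$, under which the $\phi_i$ and $V_j$ are orthonormal in their respective factors; consequently $\{\phi_i V_j\}$ is a complete orthonormal basis of $L_r^2(NC_{\epsilon,1})$ and $\{\gamma_i(\epsilon) + \mu_j\}$ is the full spectrum of $r^2 L$. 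Since $\gamma_i(\epsilon)$ increases in $i$ and $\mu_j$ in $j$, the least eigenvalue is $\lambda_1(\epsilon) = \gamma_1(\epsilon) + \mu_1$. Finally, from $\gamma_1(\epsilon) = (n-2)^2/4 + (\pi/\log\epsilon)^2$, the quantity $\gamma_1(\epsilon)$ decreases to $(n-2)^2/4$ as $\epsilon \to 0$, so by \eqref{dnot} one gets $d_0(C_1) = \inf_\epsilon \lambda_1(\epsilon) = (n-2)^2/4 + \mu_1$, which is the claimed formula.

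The main obstacle is the function-space bookkeeping in part (1): the quotient in \eqref{sstable} ranges over smooth sections compactly supported in the punctured cone $C_1$, whereas $\lambda_1(\epsilon)$ is a Dirichlet eigenvalue on the compact manifold-with-boundary $C_{\epsilon,1}$, and one must show the two infima coincide. This requires the domain-exhaustion argument above together with the density of $\Gamma_c$ in $H_0^1$ in order to legitimately use the (merely Lipschitz) zero-extension of $V_1(\epsilon)$ as a competitor, as well as the monotonicity of $\lambda_1(\epsilon)$ in $\epsilon$ so that $\inf_\epsilon \lambda_1(\epsilon)$ is attained in the limit $\epsilon \to 0$. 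A secondary point requiring care is fixing the sign conventions so that $T\phi_i = -\gamma_i\phi_i$ and $L_\Sigma V_j = -\mu_j V_j$, and verifying completeness of the product basis in the weighted space $L_r^2(NC_{\epsilon,1})$, so that the computed values are exhaustive and $\lambda_1(\epsilon)$ is genuinely $\gamma_1(\epsilon) + \mu_1$ rather than merely bounded above by it.
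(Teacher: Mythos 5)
Your proposal is correct and follows essentially the same route as the paper: separation of variables via the product eigensections $\phi_i V_j$ to identify $\lambda_1(\epsilon)=\gamma_1(\epsilon)+\mu_1$, followed by the observation that admissible sections in \eqref{sstable} are supported in some $C_{\epsilon,1}$ so the weighted Rayleigh quotient is controlled by $d_0(C_1)$. Your treatment of the reverse inequality in (1) (zero-extension of the first eigensection plus density) is slightly more explicit than the paper's, but the substance is the same.
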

\begin{proof}
    We first prove (2). Suppose $V \in \Gamma(NC_{\epsilon,1})$ vanishes on $\partial C_{\epsilon,1}$ and write 
    $$
         V(r\sigma) = \sum_{i,j} a^{ij} \phi_{i}(r) V_j(\sigma).
    $$
    Using the coarea formula and orthogonality, we obtain 
    \begin{align*}
        -\int_{C_{\epsilon,1}} \langle V, r^2L V \rangle r^{-2} \, d \mathscr{H}^n &= -\int_{\epsilon}^1 \int_\Sigma \langle r^{n-3}a^{ij} \phi_{i}V_j,  a^{kl}\phi_{k} L_\Sigma V_l + a^{kl} T \phi_{k} V_l \rangle \, dr d \mathscr{H}^{n-1} \\
         &= \int_{\epsilon}^1 \int_\Sigma \langle r^{n-3}a^{ij} \phi_{i}V_j,  a^{kl}(\gamma_k + \mu_l)\phi_{k}V_l \rangle \, dr d \mathscr{H}^{n-1} \\
         &=(a^{ij})^2(\gamma_i + \mu_j) \Big( \int_\epsilon^1 \phi_{i}^2 r^{n-3} \, dr\Big) \Big(\int_\Sigma |V_j|^2 \, d\mathscr{H}^{n-1}\Big) \\
         &\geq \sum_{i,j} (a^{ij})^2(\gamma_1 + \mu_1) \\
         &= (\gamma_1 + \mu_1) \norm{V}_{L_r^2}^2.
    \end{align*}
    It follows that $\lambda_1 \geq \gamma_1 + \mu_1$. Since equality is obtained when $V = \phi_{1} V_1$, we have $\lambda_1(\epsilon) = \gamma_1(\epsilon) + \mu_1$ for each $\epsilon \in (0,1)$. Taking the infimum over $\epsilon \in (0,1)$ proves (2). Now, if $V \in \Gamma_c(NC_1)$ vanishes on $\Sigma$, then the support of $V$ is contained in $C_{\epsilon,1}$ for some $\epsilon \in (0,1)$ so that $V$ vanishes on $\partial C_{\epsilon,1}$. Noting that the left-hand side above is equivalent to $Q(V,V)$ by cancelation and that equality is achieved in the inequality, we conclude (1). 
\end{proof}
\begin{proposition}\label{strict}
    Suppose that $C$ is a stable $n$-dimensional minimal cone in $\mathbb{R}^{m}$. Then $C$ is not strictly stable if and only if $C$ has a Jacobi field with homogeneity $(2-n)/2$.
\end{proposition}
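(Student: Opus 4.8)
The plan is to reduce the statement to the spectral data already computed in Proposition \ref{stabcondition} and then to identify homogeneous Jacobi fields with eigensections of $-L_\Sigma$ by separation of variables. By Proposition \ref{stabcondition}, a stable cone $C$ fails to be strictly stable precisely when $d_0(C_1) = 0$, which by part (2) is equivalent to
\[
\mu_1 = -\frac{(n-2)^2}{4}.
\]
Thus the proposition amounts to showing that the bottom eigenvalue $\mu_1$ of $-L_\Sigma$ equals $-(n-2)^2/4$ if and only if $C$ carries a Jacobi field homogeneous of degree $(2-n)/2$.

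Next I would analyze homogeneous Jacobi fields directly. A normal field on $C$ homogeneous of degree $(2-n)/2$ is exactly one of the form $V(r\sigma) = r^{(2-n)/2} W(\sigma)$ with $W := V|_\Sigma \in \Gamma(N\Sigma)$ (recalling that $NC|_\Sigma$ is spanned by the radially parallel $N_k$, and so is canonically identified with $N\Sigma$). Feeding a general $V = r^\alpha W$ into the polar expression \eqref{pstab} for $L$, and using that the $N_k$ are parallel along the radial rays so that $\partial_r$ sees only the factor $r^\alpha$, gives
\[
LV = r^{\alpha - 2}\big[(\alpha^2 + (n-2)\alpha)W + L_\Sigma W\big].
\]
Hence $V$ is a Jacobi field if and only if $-L_\Sigma W = (\alpha^2 + (n-2)\alpha)W$, i.e. $W$ is an eigensection of $-L_\Sigma$ with eigenvalue $\alpha^2 + (n-2)\alpha$. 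Setting $\alpha = (2-n)/2$ yields $\alpha^2 + (n-2)\alpha = -(n-2)^2/4$, so a degree-$(2-n)/2$ Jacobi field exists if and only if $-(n-2)^2/4$ is an eigenvalue of $-L_\Sigma$, that is, $\mu_j = -(n-2)^2/4$ for some $j$.

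It remains to combine these two observations, and this is where the stability hypothesis does the real work. If $\mu_1 = -(n-2)^2/4$, then $W = V_1$ (the first eigensection of $-L_\Sigma$) produces the Jacobi field $r^{(2-n)/2} V_1$ of the required homogeneity. Conversely, suppose a degree-$(2-n)/2$ Jacobi field exists, so that $\mu_j = -(n-2)^2/4$ for some $j$. Since $C$ is stable, Proposition \ref{stabcondition} gives $d_0(C_1) = (n-2)^2/4 + \mu_1 \geq 0$, i.e. $\mu_1 \geq -(n-2)^2/4$; combined with $\mu_1 \leq \mu_j = -(n-2)^2/4$ this forces $\mu_1 = -(n-2)^2/4$, whence $d_0(C_1) = 0$ and $C$ is not strictly stable.

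I expect the only delicate point to be this last direction: a priori a homogeneous Jacobi field could be assembled from an eigensection belonging to some higher eigenvalue $\mu_j$ while the bottom eigenvalue $\mu_1$ sits strictly below $-(n-2)^2/4$. It is exactly the stability assumption that excludes this, by pinning $\mu_1$ from below, so that the mere existence of such a Jacobi field forces the bottom of the spectrum to be critical. Everything else is the routine Euler-equation bookkeeping: the indicial roots of the radial operator solve $\beta^2 + (n-2)\beta + \gamma = 0$, and the critical value $\gamma = (n-2)^2/4$ (the threshold realized as $\inf_{\epsilon}\gamma_1(\epsilon)$ in \eqref{dnot}) produces the double root $\beta = (2-n)/2$, matching the homogeneity in the statement.
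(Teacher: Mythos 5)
Your proposal is correct and is essentially the paper's own argument: the paper's proof is the one-line remark that the statement ``follows from Proposition \ref{stabcondition} and the standard separation of variables,'' and your write-up simply supplies the details of that reduction (the indicial computation $LV = r^{\alpha-2}[(\alpha^2+(n-2)\alpha)W + L_\Sigma W]$ and the use of stability to pin $\mu_1$ from below in the converse direction). No gaps.
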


\begin{proof}
    This follows from Proposition \ref{stabcondition} and the standard separation of variables.
\end{proof}

We close this section with an example and a discussion of applications. Aside from the results in the present paper, the authors are unaware of nontrivial examples of strictly stable minimal cones with high codimension. In codimension one, the canonical examples are \emph{Lawson's cones} (\cite{Law}). See \cite{CHS,Sm} as well.

\begin{example}[Lawson's Cones] \label{example1} 
Let $S_{r}^k$ denote the $k$-sphere of radius $r$ in $\mathbb{R}^{n+1}$ with center at the origin. If the link $\Sigma$ is given by $\Sigma := S_{r_1}^k \times S_{r_2}^l$, where $k + l = n-1$ and 
\begin{equation*}
        r_1 = \sqrt{\frac{(k+1)}{(k + l + 2)}}, \text{ } r_2 = \sqrt{\frac{(l + 1)}{(k + l + 2)}},
\end{equation*}
then $\Sigma$ is a minimal hypersurface of $\mathbb{S}^n$ and $|A_\Sigma|^2 = n-1$ implying $\mu_1 = 1-n$. Thus, the cone $C$ with link $\Sigma$ is strictly stable if and only if $n \geq 7$. When $n = 7$, $k = l = 3$, and $r_1 = r_2 = \frac{\sqrt{2}}{2}$, we get \emph{Simons' cone}. Simons' cone was crucial in resolving the Bernstein problem (\cite{B, G, Si}).
\end{example} 

Strict stability has been used to produce examples of nontrivial singular stable minimal hypersurfaces (\cite{CHS, Sm, Sm1}). Of particular interest to the authors is the work of Smale in \cite{Sm}. Smale constructed the first examples of (stable) minimal hypersurfaces with many isolated singularities by solving a fixed point problem for $L$ in order to glue strictly stable minimal hypercones. The strict stability condition was needed to obtain the $L^2$ and Schauder estimates necessary for solving the fixed point problem, as well as to ensure the singularities were preserved and that the constructed hypersurfaces were stable. In \cite{D2}, Smale's constructions were recently extended to high codimension. In fact, Theorem \ref{coassoc} and Example \ref{LOcone} together with \cite{D2} imply the existence of four dimensional Lipschitz minimal graphs in $\mathbb{R}^7$ having any finite number of isolated singularities.

\section{Calibrations}
We briefly define calibrations and calibrated submanifolds in $\mathbb{R}^m$. For a full discussion, see the original work of Harvey-Lawson in \cite{HL}.

\begin{definition}
\begin{enumerate}
        \item An $n$-form $\omega$ on $\mathbb{R}^m$ is a \emph{calibration} if $\omega$ is closed and for each $p \in \mathbb{R}^m$ we have $|\omega(e_1, \ldots, e_n)| \leq 1$ for all orthogonal unit vectors $e_i \in T_p\mathbb{R}^m$.
    \item Let $\omega$ be a calibration $n$-form on $\mathbb{R}^m$. An oriented $n$-dimensional submanifold $M$ of $\mathbb{R}^m$ is \emph{calibrated by $\omega$} if $\omega$ restricts to the volume form on $M$; i.e. for all $p \in M$ we have $\omega(e_1, \ldots, e_n) = 1$ for any oriented orthonormal basis $e_1,\ldots, e_n$ for $T_pM$.
\end{enumerate}
\end{definition}
Calibrated submanifolds are area-minimizing in their homology class (see Theorem II.4.2 in \cite{HL}); hence, are stable. We focus on complex, special Lagrangian, and coassociative calibrated cones.

\section{Complex Cones}
        In this section, we discuss complex cones. For additional background, we refer to \cite{HL}. Let $\mathbb{C}^n$ have the standard coordinates $z_j=x_j+iy_j$, for $j=1, \cdots, n$. We consider the standard complex structure $J$ and the K{\"a}hler form $\omega$. In the standard coordinates,
        \begin{align*}
            J\left(\frac{\partial}{\partial x_j} \right)=\frac{\partial}{\partial y_j}, \quad J\left(\frac{\partial}{\partial y_j} \right)=-\frac{\partial}{\partial x_j}\\
            \omega = \sum_{j=1}^{m} x_j \wedge y_j = \frac{i}{2} \sum_{j=1}^{m} dz_j \wedge d\bar{z}_j.
        \end{align*}
        Then, for each positive integer $k$, $\omega^{k}/k!$ is a calibration. 

        \begin{theorem}[\cite{HL}]
        An oriented $2k$-dimensional submanifold $M$ in $\mathbb{C}^{n}$ is a complex submanifold
        if and only if it is calibrated by $\omega^{k}/k!$ .
        \end{theorem}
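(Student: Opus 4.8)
The plan is to reduce the statement to a pointwise assertion in Hermitian linear algebra and then prove the sharp form of \emph{Wirtinger's inequality}, including its equality case. Both conditions in the theorem—being calibrated by $\omega^k/k!$ and being a complex submanifold—are tested tangent space by tangent space, so it suffices to establish: for an oriented real $2k$-dimensional subspace $P \subset \mathbb{C}^n$ with oriented orthonormal basis $e_1,\ldots,e_{2k}$, one has $\tfrac{\omega^k}{k!}(e_1,\ldots,e_{2k}) \leq 1$, with equality if and only if $P$ is $J$-invariant and carries its induced complex orientation. Since $\omega^k/k!$ has constant coefficients it is automatically closed, so this inequality is exactly the comass bound making it a calibration. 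Granting the pointwise claim, $M$ is calibrated precisely when $\tfrac{\omega^k}{k!}$ restricts to the volume form on every $T_pM$, which by the equality case happens exactly when each $T_pM$ is a complex subspace, i.e. when $M$ is a complex submanifold.

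First I would put $\omega|_P$ into a normal form. Using the standard convention $\omega(u,v) = \langle Ju, v\rangle$, and noting that $J$ is skew-adjoint because $J^2 = -\Id$ and $J$ is an isometry, the restriction $\omega|_P$ is a skew-symmetric bilinear form on the Euclidean space $P$. Hence there is an orthonormal basis $f_1, g_1, \ldots, f_k, g_k$ of $P$ diagonalizing it into $2\times 2$ blocks, so that $\omega|_P = \sum_{i=1}^k \lambda_i\, f_i^* \wedge g_i^*$ with $\lambda_1 \geq \cdots \geq \lambda_k \geq 0$. Taking the $k$-th exterior power gives $\tfrac{\omega^k}{k!}\big|_P = (\lambda_1\cdots\lambda_k)\, f_1^* \wedge g_1^* \wedge \cdots \wedge f_k^* \wedge g_k^*$, and since the change between two orthonormal bases of $P$ is orthogonal, evaluating on $e_1,\ldots,e_{2k}$ yields $\pm\,\lambda_1\cdots\lambda_k$.

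The crucial estimate is $\lambda_i \leq 1$ for all $i$. This follows because $J$ is an isometry, whence $|\omega(u,v)| = |\langle Ju, v\rangle| \leq |u|\,|v|$, forcing the operator norm of $\omega|_P$, and therefore each singular value $\lambda_i$, to be at most one. Thus $\lambda_1\cdots\lambda_k \leq 1$, proving the inequality. For the equality case, $\lambda_1\cdots\lambda_k = 1$ with $0 \leq \lambda_i \leq 1$ forces every $\lambda_i = 1$; then $\langle J f_i, g_i\rangle = 1$ for unit vectors $f_i, g_i$ is the equality case of Cauchy–Schwarz, giving $g_i = J f_i$. Hence $P = \mathrm{span}\{f_1, Jf_1, \ldots, f_k, Jf_k\}$ is $J$-invariant, and one checks the resulting sign is $+1$ exactly for the complex orientation. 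Conversely, if $P$ is complex I would take a unitary basis with $g_i = J f_i$ and compute directly that $\tfrac{\omega^k}{k!}$ restricts to the volume form.

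The hard part will be the bookkeeping in the equality case rather than the inequality itself: the diagonalization of the skew form and the operator-norm bound on $\omega$ are routine, so the real content is the rigidity step—extracting $J$-invariance of $P$ from the saturated inequality via equality in Cauchy–Schwarz, and matching the algebraic orientation produced by the wedge power with the canonical complex orientation of $P$, so that one obtains $\tfrac{\omega^k}{k!} = +1$ and not merely $\big|\tfrac{\omega^k}{k!}\big| \leq 1$.
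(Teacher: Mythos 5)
The paper offers no proof of this statement---it is quoted verbatim from Harvey--Lawson \cite{HL}---and your proposal reproduces the standard argument given there: reduce to a pointwise statement about oriented $2k$-planes and prove Wirtinger's inequality with its equality case via the orthonormal normal form $\omega|_P=\sum_i\lambda_i\,f_i^*\wedge g_i^*$, the bound $\lambda_i\le 1$, and the Cauchy--Schwarz rigidity $g_i=Jf_i$. This is correct, including the orientation bookkeeping you flag as the delicate point. The one step you pass over silently is the identification of ``every tangent space $T_pM$ is $J$-invariant'' with ``$M$ is a complex submanifold'': for a smooth real submanifold of $\mathbb{C}^n$ this is true but not a tautology, and deserves a line (e.g.\ write $M$ locally as a graph over a complex tangent plane and check that $J$-invariance of the tangent spaces forces the graph map to satisfy the Cauchy--Riemann equations).
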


         Let $f(z)$ be a homogeneous complex polynomial of degree $n$ defined on $\mathbb{C}^{n+1}$. For example, 
         \begin{equation*}
             f(z)=\sum_{j=1}^{n+1} (z_{j})^{n}.
         \end{equation*}
         Then, the zero set $f^{-1}(0)$ is a complex cone of real dimension $2n$. In particular, the cone has an isolated singularity if its complex exterior derivative $\partial f$ vanishes only at $0$, where 
         \begin{equation*}
             \partial f = \frac{\partial f}{\partial z_{j}} dz_{j}.
         \end{equation*}

\begin{theorem}
    Let $f(z)$ be a homogeneous polynomial of degree $n$ defined on $\mathbb{C}^{n+1}$. If $\partial f$ vanishes only at $0$, then the cone $C=f^{-1}(0)$ has a Jacobi field with homogeneity $(1-n)$. Thus, $C$ is not strictly stable.
\end{theorem}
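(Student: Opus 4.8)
The plan is to exhibit one explicit, nonzero Jacobi field on $C$ of homogeneity $1-n$ and then quote Proposition \ref{strict}. Since $C$ is a complex cone, $\dim_{\mathbb{R}} C = 2n$, so the critical homogeneity $(2-\dim_{\mathbb{R}}C)/2$ of Proposition \ref{strict} is exactly $1-n$; moreover $C$ is calibrated by $\omega^n/n!$, hence area-minimizing and in particular stable, so Proposition \ref{strict} applies. Thus it suffices to produce a single nontrivial Jacobi field of homogeneity $1-n$, after which the conclusion is immediate.

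The candidate is the normal field $V := \overline{\nabla f}/|\nabla f|^2$, where $\nabla f := (\partial f/\partial z_1,\ldots,\partial f/\partial z_{n+1})$, with $\mathbb{C}^{n+1}\cong\mathbb{R}^{2n+2}$ via the usual identification and the standard Hermitian product. First I would record that $V$ is normal: the complex tangent space at $p\in C$ is $\{v:\partial f(v)=\sum_j(\partial_j f)v_j=0\}$, so $\overline{\nabla f}$ spans the Hermitian (hence real, after applying $J$) normal direction, and $V$ is a complex multiple of it. Next I would compute the homogeneity. Because $f$ is homogeneous of degree $n$, each $\partial_j f$ is homogeneous of degree $n-1$, so along a ray $r\mapsto r\sigma$ the vector $\overline{\nabla f}(r\sigma)=r^{\,n-1}\overline{\nabla f}(\sigma)$ points in a \emph{constant} ambient direction; consequently the unit normal $\overline{\nabla f}/|\nabla f|$ is parallel along radial lines, and $V(r\sigma)=r^{1-n}\,|\nabla f(\sigma)|^{-1}\,\overline{\nabla f}(\sigma)$. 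This shows that $V$ is genuinely homogeneous of degree $1-n$ in the sense of the radially parallel normal frame of Section 2.3, not merely that it scales like $r^{1-n}$ as an ambient vector. It is smooth on $C\setminus\{0\}$ precisely because $\partial f$ vanishes only at the origin.

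To see that $V$ is a Jacobi field I would realize it as the normal variation field of the family of complex hypersurfaces $M_t:=\{f=t\}$, $t\in\mathbb{R}$. A direct computation gives $\partial f(V)=\sum_j(\partial_j f)\,\overline{\partial_j f}/|\nabla f|^2=1$, so the real directional derivative of $f$ along $V$ is $1$; by the implicit function theorem there is a smooth map $F(\cdot,t)$ with $F(\cdot,0)=\mathrm{id}_C$, $F(C,t)\subset M_t$, and $\partial_t F|_{t=0}=V$. For $t\neq 0$, $M_t$ is the zero set of the holomorphic function $f-t$, whose complex derivative $\partial(f-t)=\partial f$ is nonvanishing on $M_t$ (note $0\notin M_t$); hence $M_t$ is a smooth complex submanifold, thus calibrated and minimal. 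Linearizing the identity $H[M_t]\equiv 0$ in $t$ at $t=0$, together with the fact that $V=\partial_t F|_{t=0}$ is already normal, yields $LV=\Delta^\perp V+\tilde{A}(V)=0$. Applying Proposition \ref{strict} to this homogeneity $1-n$ Jacobi field gives that $C$ is not strictly stable.

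The two points requiring care are exactly (i) the homogeneity bookkeeping, where I would emphasize the special feature of complex hypersurfaces that $\overline{\nabla f}$ has constant direction along each ray so the unit normal is radially parallel, ensuring the intrinsic homogeneity equals $1-n$; and (ii) the (standard but sign-sensitive) statement that the normal variation field of a one-parameter family of minimal submanifolds is a Jacobi field, which I would justify cleanly by differentiating the vanishing mean curvature of $M_t$ in $t$ in the flat ambient space, where the Jacobi operator is precisely $\Delta^\perp+\tilde{A}$. I expect (ii) to be the main conceptual step, while (i) and the verification $\partial f(V)=1$ are short computations.
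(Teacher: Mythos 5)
Your proof is correct and follows essentially the same route as the paper's: under the identification $\overline{\nabla f}=\operatorname{grad}(\operatorname{Re}f)$ (via the Cauchy--Riemann equations), your field $V=\overline{\nabla f}/|\nabla f|^2$ is exactly the paper's $|\operatorname{grad}u|^{-2}\operatorname{grad}u$, realized as the normal variation field of the family of minimal complex hypersurfaces $f^{-1}(t)$. You spell out a few points the paper leaves implicit (normality of $V$, the radial parallelism that makes the intrinsic homogeneity equal $1-n$, and the linearization of $H[M_t]\equiv 0$), but the argument is the same.
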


\begin{proof}
    Let $f=u+iv$. Since $f$ is holomorphic, we have $\g v=J(\g u)$. For each $p\in C$, we consider an integral curve $c(t)$ of $|\g u|^{-2}\g u$ starting at $p$. Then, we have
    \begin{align*}
        (u \circ c)'(t) & = \g u \cdot |\g u|^{-2}\g u = 1 \\
        (v \circ c)'(t) & = \g v \cdot |\g u|^{-2}\g u = 0.
    \end{align*}
    Therefore, $c(t)\in f^{-1}(t)$. This implies that a family of complex submanifolds $f^{-1}(t)$ is a normal variation of the cone $C$. Therefore, $|\g u|^{-2}\g u$ is a Jacobi field on $C \setminus \{0\}$ with homogeneity $1-n=(2-2n)/2$.
\end{proof}

\section{Special Lagrangian Cones}
Here, we prove the strict stability of special Lagrangian cones. We refer to \cite{HL} for the basic theory and examples of special Lagrangian submanifolds and cones, and refer to \cite{Ha1, Ha3, Ha2, Ha4, Jo1, Jo2, Mc} for further examples of special Lagrangian cones and their constructions. Let $\mathbb{C}^n$ have the standard coordinates $z_j=x_j+iy_j$, for $j=1, \cdots, n$. We consider the standard complex structure $J$, K{\"a}hler form $\omega$, and holomorphic volume form $\Omega$. In the standard coordinates,
        \begin{align*}
            J\left(\frac{\partial}{\partial x_j} \right)=\frac{\partial}{\partial y_j}, \quad J\left(\frac{\partial}{\partial y_j} \right)=-\frac{\partial}{\partial x_j}\\
            \omega = \sum_{j=1}^{m} x_j \wedge y_j = \frac{i}{2} \sum_{j=1}^{n} dz_j \wedge d\bar{z}_j \\
            \Omega = dz_{1} \wedge \cdots \wedge dz_{n}.            
        \end{align*}

For each real number $\theta$, the real part $Re(e^{-i\theta}\Omega)$ is a calibration. 

\begin{definition}
    An oriented $n$-dimensional submanifold $L$ in $\mathbb{C}^{n}$ is a special Lagrangian submanifold (with angle $e^{i\theta}$) if $L$ is calibrated by $Re(e^{-i\theta}\Omega)$.
\end{definition}

First, we recall the property of closed and co-closed 1-forms on a cone.

\begin{theorem}{(\cite{FHN}, Appendix A)}
    Let $C^n$ be a cone in $\mathbb{R}^{m}$. Consider a homogeneous 1-form $\alpha$ of homogeneity $\lambda$:
    \begin{equation*}
        \alpha=r^{\lambda}\eta dr+r^{\lambda+1} \omega,
    \end{equation*}
    where $\eta$ is a function on $\Sigma$ and $\omega$ is a 1-form on $\Sigma$.
    If $\alpha$ is closed and co-closed, then

    \begin{align}
        d\eta-(\lambda+1)\omega&=0 \label{forms} \\
        d\omega&=0 \label{forms2} \\                
        \delta \omega -(\lambda+n-1)\eta&=0. \label{forms3}
    \end{align}
    In particular, we have
    \begin{equation}\label{hodge}
            \Delta_{\Sigma} \eta = (\lambda+1)(\lambda+n-1) \eta, \qquad
            \Delta_{\Sigma} \omega = (\lambda+1)(\lambda+n-1) \omega,
    \end{equation} where $\Delta_{\Sigma}$ is the Hodge Laplacian on $\Sigma$.
\end{theorem}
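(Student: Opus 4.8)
The plan is to compute the exterior derivative and the codifferential of $\alpha$ directly in cone coordinates, using the warped-product structure $g_C = dr^2 + r^2 g_\Sigma$, and then to separate the resulting identities according to their radial weight and their type relative to $dr$. Throughout I treat $\eta$ and $\omega$ as pulled back from $\Sigma$, so that their derivatives along $\Sigma$ carry no $dr$ component, and I reserve $d$, $\delta$, $\Delta_\Sigma$ for the operators on the link, writing the cone operators with a subscript $C$.

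First I would obtain \eqref{forms} and \eqref{forms2} from $d_C\alpha = 0$. Since $\eta$ and $\omega$ depend only on $\sigma \in \Sigma$, differentiating gives
\begin{align*}
    d_C\alpha &= d(r^\lambda \eta)\wedge dr + d(r^{\lambda+1})\wedge\omega + r^{\lambda+1}d\omega \\
    &= -r^\lambda\, dr\wedge\big(d\eta - (\lambda+1)\omega\big) + r^{\lambda+1}d\omega.
\end{align*}
The two summands have distinct types: the first contains the factor $dr$, the second is purely tangential to $\Sigma$. Hence each must vanish separately, which is exactly \eqref{forms} and \eqref{forms2}.

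The substantive step is the codifferential, and I expect this to be the main obstacle, because it requires careful bookkeeping of the $r$-weights produced by the cone metric and its volume form. Working in coordinates $(r,x^a)$ with $(\det g_C)^{1/2} = r^{n-1}(\det g_\Sigma)^{1/2}$ and $g_C^{rr}=1$, $g_C^{ab}=r^{-2}g_\Sigma^{ab}$, I would raise indices to get the dual field $\alpha^\sharp = r^\lambda\eta\,\partial_r + r^{\lambda-1}\omega^\sharp_\Sigma$ and apply $\delta_C\alpha = -(\det g_C)^{-1/2}\partial_\mu\big((\det g_C)^{1/2}(\alpha^\sharp)^\mu\big)$. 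The radial derivative produces the factor $\partial_r(r^{n-1+\lambda}) = (\lambda+n-1)r^{n-2+\lambda}$, and hence the term $-(\lambda+n-1)\eta\,r^{\lambda-1}$, while the tangential divergence contributes $r^{\lambda-1}\delta\omega$; altogether $\delta_C\alpha = r^{\lambda-1}\big(\delta\omega - (\lambda+n-1)\eta\big)$, so that $\delta_C\alpha = 0$ yields \eqref{forms3}. The coefficient $\lambda+n-1$ here is precisely the radial scaling exponent and should not be confused with the $\lambda+1$ arising from $d$.

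Finally, \eqref{hodge} follows by pure algebra from \eqref{forms}--\eqref{forms3} together with the definition of the Hodge Laplacian on $\Sigma$. For the function $\eta$, the term $d\delta\eta$ vanishes, so $\Delta_\Sigma\eta = \delta d\eta = (\lambda+1)\delta\omega = (\lambda+1)(\lambda+n-1)\eta$, using \eqref{forms} and then \eqref{forms3}. For the $1$-form $\omega$, the term $\delta d\omega$ vanishes by \eqref{forms2}, so $\Delta_\Sigma\omega = d\delta\omega = (\lambda+n-1)\,d\eta = (\lambda+1)(\lambda+n-1)\omega$, using \eqref{forms3} and then \eqref{forms}. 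This completes the plan.
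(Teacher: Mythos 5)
Your computation is correct: the splitting of $d_C\alpha$ by $dr$-type gives \eqref{forms}--\eqref{forms2}, the divergence formula with the weight $r^{n-1}\sqrt{\det g_\Sigma}$ correctly produces the coefficient $\lambda+n-1$ in \eqref{forms3}, and \eqref{hodge} then follows algebraically as you say. The paper itself offers no proof of this statement (it is quoted from Appendix A of the cited reference), and your direct computation in cone coordinates is exactly the standard argument that source carries out.
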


As an immediate corollary we obtain: 

\begin{corollary}\label{coro}
    Let $C$ be a special Lagrangian cone in $\mathbb{C}^{n}$. Let $\alpha$ be a 1-form of homogeneity $\lambda=(2-n)/2$ on $C$.    
    \begin{enumerate}
        \item If $n=4$ and the link $\Sigma$ is simply connected, then there is no closed and co-closed 1-form $\alpha$ on $C$ with homogeneity $\lambda=-1$.
        \item If $n>4$, then there is no closed and co-closed 1-form $\alpha$ with homogeneity $\lambda=(2-n)/2$.
    \end{enumerate}
\end{corollary}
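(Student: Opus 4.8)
The plan is to read off the conclusion directly from the preceding theorem of \cite{FHN}, reducing the whole problem to a spectral statement about the Hodge Laplacian $\Delta_\Sigma$ on the compact link. Writing the homogeneous $1$-form as $\alpha = r^\lambda \eta\, dr + r^{\lambda+1}\omega$ with $\lambda = (2-n)/2$, the hypothesis that $\alpha$ is closed and co-closed puts us exactly in the situation of \eqref{forms}--\eqref{hodge}, so $\eta$ and $\omega$ each satisfy an eigenvalue equation for $\Delta_\Sigma$ with eigenvalue $(\lambda+1)(\lambda+n-1)$. The key elementary computation is that, for $\lambda=(2-n)/2$,
\begin{equation*}
    \lambda+1 = \frac{4-n}{2}, \qquad \lambda+n-1 = \frac{n}{2}, \qquad (\lambda+1)(\lambda+n-1) = \frac{n(4-n)}{4}.
\end{equation*}
Thus by \eqref{hodge} both $\eta$ and $\omega$ are eigenobjects of $\Delta_\Sigma$ with the single eigenvalue $n(4-n)/4$, and the sign of this number controls everything. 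Since $\Sigma$ is compact and (being the link of a calibrated cone) oriented, $\Delta_\Sigma$ is non-negative and self-adjoint on both functions and $1$-forms, with $\langle \Delta_\Sigma \beta, \beta\rangle = \|d\beta\|^2 + \|\delta\beta\|^2 \ge 0$.

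For part (2), assume $n>4$. Then $n(4-n)/4 < 0$, so \eqref{hodge} would exhibit a strictly negative eigenvalue of $\Delta_\Sigma$ acting on the function $\eta$ and on the $1$-form $\omega$. As the Hodge Laplacian has only non-negative spectrum, this forces $\eta \equiv 0$ and $\omega \equiv 0$, hence $\alpha \equiv 0$. No topological input is needed here; the positivity of $\Delta_\Sigma$ does all the work, and this is the clean case.

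For part (1) we have $n=4$, so $\lambda=-1$ and the eigenvalue $n(4-n)/4$ degenerates to $0$; this borderline vanishing is the main obstacle, since spectral positivity alone no longer concludes. Here \eqref{hodge} only tells us that $\eta$ is a harmonic function and $\omega$ is a harmonic $1$-form on $\Sigma^3$. Since $\Sigma$ is simply connected it is in particular connected, so the harmonic function $\eta$ is constant; and by Hodge theory the space of harmonic $1$-forms is isomorphic to $H^1(\Sigma;\mathbb{R})$, which vanishes because $\pi_1(\Sigma)=0$ gives $b_1(\Sigma)=0$. Hence $\omega\equiv 0$. Finally I would feed this back into the coupling relation \eqref{forms3}: with $\lambda+n-1 = 2$ and $\omega\equiv0$ (so $\delta\omega=0$), equation \eqref{forms3} reads $-2\eta = 0$, forcing the constant $\eta$ to vanish as well. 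Therefore $\alpha\equiv 0$, which completes part (1). The two cases together cover $\lambda = (2-n)/2$ for all $n\ge 4$, with the only genuine subtlety being the use of simple connectivity to eliminate harmonic $1$-forms precisely when $n=4$.
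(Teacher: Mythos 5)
Your proof is correct and follows essentially the same route as the paper: both cases reduce to the identities \eqref{forms}--\eqref{hodge} from \cite{FHN}, with the sign of $(\lambda+1)(\lambda+n-1)=n(4-n)/4$ disposing of $n>4$ and simple connectivity of $\Sigma$ handling the borderline case $n=4$. The only cosmetic difference is in case (1), where the paper writes $\omega=df$ and notes that $\Delta f = 2\eta$ is a constant of zero integral on the compact link, whereas you invoke Hodge theory directly ($b_1(\Sigma)=0$ kills the harmonic form $\omega$) and then feed $\omega=0$ back into \eqref{forms3} to force $\eta=0$; the two arguments are interchangeable.
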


\begin{proof}
    In case $(1)$, \eqref{forms} implies $\eta$ is a constant. Since $\Sigma$ is simply connected, \eqref{forms2} implies $\omega=df$. By \eqref{forms3}, $\Delta f$ is constant. This is possible only when $\eta=0$ and $f$ is constant. Thus, $\alpha=0$. 
    In case $(2)$, \eqref{hodge} implies $(\lambda+1)(\lambda+n-1)$ is a negative eigenvalue of the Hodge Laplacian. Thus, $\alpha=0$.  
\end{proof}

\begin{theorem}\label{Lthm}
Let $C$ be a special Lagrangian cone in $\mathbb{C}^{n}$. 
    \begin{enumerate}
        \item If $n=4$ and the link $\Sigma$ is simply connected, then $C$ is strictly stable.
        \item If $n>4$, then $C$ is strictly stable.        
    \end{enumerate}

\end{theorem}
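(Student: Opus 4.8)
The plan is to reduce strict stability to the nonexistence of a certain harmonic $1$-form and then invoke Corollary \ref{coro}. Since $C$ is calibrated by $\operatorname{Re}(e^{-i\theta}\Omega)$, it is area-minimizing and hence stable, so Proposition \ref{strict} applies: $C$ fails to be strictly stable if and only if it carries a nonzero Jacobi field of homogeneity $(2-n)/2$. Thus it suffices to show that, in both case $(1)$ and case $(2)$, the only Jacobi field of homogeneity $(2-n)/2$ is the zero field. My strategy is to transport such a Jacobi field to a closed and co-closed $1$-form of homogeneity $(2-n)/2$ and then apply Corollary \ref{coro}, which rules these out precisely under the stated hypotheses.

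First I would set up the Lagrangian identification of normal fields with $1$-forms. Since $C$ is Lagrangian, $J$ maps $NC$ isomorphically onto $TC$, so to each $V \in \Gamma(NC)$ I associate the tangential field $JV$ and the $1$-form $\alpha_V := \iota_V \omega = (JV)^\flat$, using $\omega(\cdot,\cdot) = \langle J\cdot,\cdot\rangle$ and the induced flat-cone metric to lower the index. Because $\omega$ and $J$ are parallel with constant coefficients on $\mathbb{C}^n$, this correspondence is isometric and commutes with the radial parallel transport used to define homogeneous sections. Writing the cone metric as $dr^2 + r^2 g_\Sigma$, a normal field of homogeneity $(2-n)/2$ produces $\alpha_V = r^\lambda \eta\, dr + r^{\lambda+1}\beta$ with $\lambda = (2-n)/2$, which is exactly the homogeneity convention of the cited closed-and-co-closed $1$-form theorem and of Corollary \ref{coro}. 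So the homogeneity bookkeeping matches on the nose, and the two senses of ``homogeneity $(2-n)/2$'' coincide under $V \mapsto \alpha_V$.

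The crux is to show that $V$ is a Jacobi field if and only if $\alpha_V$ is harmonic, i.e. $LV = 0 \iff d\alpha_V = 0 \text{ and } \delta\alpha_V = 0$. The mechanism is the standard fact that for a minimal Lagrangian submanifold of a Ricci-flat K\"ahler manifold the map $V \mapsto \alpha_V$ intertwines the Jacobi operator $L = \Delta^\perp + \tilde{A}$ with the Hodge Laplacian on $1$-forms (see, e.g., \cite{Mc, Ohn}). Concretely I would compare the Weitzenb\"ock formula $\Delta_{\mathrm{Hodge}} = \nabla^*\nabla + \operatorname{Ric}$ with the Jacobi operator and check that, under the isometry $JV \leftrightarrow V$, the rough Laplacians agree while the intrinsic curvature term $\operatorname{Ric}$ of $C$ is matched by Simons' operator $\tilde{A}$ via the Gauss equation, the minimality of $C$, and the vanishing of the ambient Ricci curvature of $\mathbb{C}^n$. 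This Weitzenb\"ock comparison is the main obstacle: one must verify that no ambient-curvature correction survives (which is precisely what Ricci-flatness buys) and that the Lagrangian condition makes the normal connection $\nabla^\perp$ correspond to the Levi-Civita connection acting on $\alpha_V$. Granting this, a homogeneity-$(2-n)/2$ Jacobi field yields a nonzero closed and co-closed $1$-form of homogeneity $(2-n)/2$, contradicting Corollary \ref{coro} in both case $(1)$ and case $(2)$; hence no such Jacobi field exists and $C$ is strictly stable by Proposition \ref{strict}.
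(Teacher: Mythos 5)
Your overall skeleton --- reduce to a homogeneity-$(2-n)/2$ Jacobi field via Proposition \ref{strict}, pass to the $1$-form $\alpha_V=(JV)^\flat$, and contradict Corollary \ref{coro} --- is the same as the paper's. But there is a genuine gap at what you yourself call the crux. The Lagrangian intertwining of the Jacobi operator with the Hodge Laplacian (Oh, McLean) gives you $LV=0 \iff \Delta_{\mathrm{Hodge}}\alpha_V=0$, i.e.\ $(d\delta+\delta d)\alpha_V=0$. You then silently identify ``harmonic'' with ``closed and co-closed,'' writing $LV=0\iff d\alpha_V=0$ and $\delta\alpha_V=0$. That identification is an integration-by-parts statement, valid on a compact manifold or for compactly supported forms, but not automatic on the noncompact punctured cone $C\setminus\{0\}$: a homogeneous solution of $\Delta_{\mathrm{Hodge}}\alpha=0$ need not satisfy $d\alpha=0$ and $\delta\alpha=0$ separately. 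Corollary \ref{coro} (via the quoted result of Foscolo--Haskins--Nordstr\"om) only excludes \emph{closed and co-closed} $1$-forms of homogeneity $(2-n)/2$, so your argument as written does not reach a contradiction.

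This missing step is exactly where the paper does its real work. The decay $|V|^2\le Kr^{2-n}$ is borderline: $\int_{C_1}|V|^2r^{-2}$ diverges logarithmically, so one cannot integrate by parts directly. The paper inserts a logarithmic cutoff $\varphi$ supported in $e^{-2N}\le r\le e^{2N}$, uses the Jacobi equation to compute $Q(\varphi V,\varphi V)=\int_C|\nabla\varphi|^2|V|^2\le \tfrac{2K}{N}\mathscr{H}^{n-1}(\Sigma)$, and compares with the special Lagrangian second variation formula $Q(\varphi V,\varphi V)=\int_C|d(\varphi\alpha)|^2+|\delta(\varphi\alpha)|^2\ge\int_{C\cap\{e^{-N}<|x|<e^{N}\}}|d\alpha|^2+|\delta\alpha|^2$; letting $N\to\infty$ forces $d\alpha=0$ and $\delta\alpha=0$. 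To repair your proof you need this (or an equivalent) cutoff/capacity argument; the Weitzenb\"ock comparison alone does not suffice.
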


    \begin{proof}
        Suppose that $C$ is not strictly stable. By Proposition \ref{strict}, there exists a Jacobi field $V$ of homogeneity $(2-n)/2$ satisfying $|V|^{2}\le K r^{2-n}$ for some constant $K>0$. We consider a logarithmic cutoff function $\varphi$
        \begin{equation*}
            \varphi(r)=   \begin{cases} 
                          0 &  (0 \le r\leq e^{-2N}) \\
                          2+\frac{\log r}{N} &  (e^{-2N}\leq r\le e^{-N}) \\
                          1 &  (e^{-N} \le r\leq e^{N}) \\ 
                          2-\frac{\log r}{N} &  (e^{N}  \le r\leq e^{2N}) \\
                          0 &  (e^{2N} \le r <  \infty).
                        \end{cases}
        \end{equation*}

        Next, we compute the second variation of the cone $C$ in the direction of the normal vector field $\varphi V$. Since $V$ is a Jacobi field, we have

        \begin{align*}
          Q(\varphi V,\varphi V)& =\int_{C} \big(|\nabla^\perp (\varphi V)|^2 - \langle (\varphi V), \tilde{A}(\varphi V) \rangle \\
                                & =\int_{C} |\nabla \varphi|^{2} |V|^{2}+\frac{1}{2} \langle \nabla \varphi^{2}, \nabla |V|^{2} \rangle + \varphi^2|\nabla^\perp V|^2 - \varphi^2 \langle V, \tilde{A}(V) \rangle \\
                                & =\int_{C} |\nabla \varphi|^{2} |V|^{2}-\frac{1}{2} \varphi^{2} \Delta |V|^{2} + \varphi^2|\nabla^\perp V|^2 - \varphi^2 \langle V, \tilde{A}(V) \rangle \\
                                & =\int_{C} |\nabla \varphi|^{2} |V|^{2} 
                                - \varphi^2 \langle V, \Delta^{\perp} V \rangle - \varphi^2 \langle V, \tilde{A}(V) \rangle \\
                                & =\int_{C} |\nabla \varphi|^{2} |V|^{2}.                                   
        \end{align*}
                
        By the coarea formula, we have
        
        \begin{align*}
            \int_{C} |\nabla \varphi|^{2} |V|^{2} & = \int_{C\cap \{ e^{-2N} < |x| < e^{-N}\} } |\nabla \varphi|^{2} |V|^{2} + \int_{C\cap \{ e^{N} < |x| < e^{2N}\} } |\nabla \varphi|^{2} |V|^{2} \\
            & \le \frac{K}{N^2} \left ( \int_{e^{-2N}}^{e^{-N}} r^{-n} \int_{r \Sigma} 1 \, d \mathscr{H}^{n-1} dr
            +
            \int_{e^{N}}^{e^{2N}} r^{-n} \int_{r \Sigma} 1 \, d \mathscr{H}^{n-1} dr \right ) \\
            &= \frac{K}{N^2} \left ( N \mathscr{H}^{n-1}(\Sigma) + N \mathscr{H}^{n-1}(\Sigma) \right ) \\
            &= \frac{2K}{N} \mathscr{H}^{n-1}(\Sigma).
        \end{align*}

        On the other hand, if we let $\alpha = (JV)^{\flat}$, then the second variation formula for special Lagrangian submanifolds (\cite{Oh}, Proposition 4.1; \cite{M}, Theorem 3.13) implies that 
        \begin{align*}
            Q(\varphi V,\varphi V) & = \int_{C} |d(\varphi \alpha)|^2 + |\delta(\varphi \alpha)|^2 \\
                                   & \ge \int_{C\cap \{ e^{-N} < |x| < e^{N}\}} |d \alpha|^2 + |\delta \alpha|^2. 
        \end{align*}
        Letting $N\to \infty$, by Lebesgue's monotone convergence theorem, we have $d \alpha=0$ and $\delta \alpha=0$. This contradicts Corollary \ref{coro}.
    \end{proof}

\section{Coassociative Cones}

We prove that all coassociative cones in $\mathbb{R}^7$ are strictly stable. For more on coassociative submanifolds and cones, we refer the reader to \cite{HL} and the works of Lotay (e.g. \cite{L3, L2, L}). Let $x^1, \ldots, x^7$ be coordinates on $\mathbb{R}^7$ with the Euclidean metric. Write $dx^{ij\ldots k}$ for the exterior form 
$$
    dx^i \wedge dx^j \wedge \cdots \wedge dx^k \text{ on } \mathbb{R}^7.
$$
and define the \emph{associative $3$-form} $\omega_0$ on $\mathbb{R}^7$ by
\begin{equation*}
    \omega_0 := dx^{567} + dx^5(dx^{12} - dx^{34}) + dx^6(dx^{13} - dx^{24}) + dx^7(dx^{14} - dx^{23}). \label{not}
\end{equation*}
The \emph{coassociative $4$-form} $\tilde{\omega}_0$ is then defined as the Hodge dual of $\omega_0$: $\tilde{\omega}_0 = *\omega_0$. In coordinates, 
\begin{equation*}
    \tilde{\omega}_0 := dx^{1234} - dx^{67}(dx^{12}- dx^{34})+ dx^{57}(dx^{13} + dx^{24}) - dx^{56}(dx^{14} - dx^{23}). \label{notstar}
\end{equation*}

\begin{definition}
    An oriented 4-dimensional submanifold $M$ in $\mathbb{R}^7$ is said to be \emph{coassociative} if it is calibrated by $\tilde{\omega}_0$. Equivalently, $M$ is coassociative if and only if $\omega_0 \lvert_M = 0$.
\end{definition}
The following example is due to Lawson-Osserman (\cite{LO}) and Harvey-Lawson (\cite{HL}).

\begin{example}[Lawson-Osserman Cone] \label{LOcone}
Let $\eta: \mathbb{S}^3 \rightarrow \mathbb{S}^2$ be the Hopf map:
\begin{equation*}
    \eta(z_1,z_2) = (2\overline{z_1}z_2, |z_1|^2 - |z_2|^2)
\end{equation*}
for $(z_1,z_2) \in \mathbb{C}^2$ with $|z_1|^2 + |z_2|^2 = 1$. Making the identifications $\mathbb{C}^2 \simeq \mathbb{R}^4$ and $\mathbb{C} \times \mathbb{R}\simeq \mathbb{R}^3$, we see that the graph of $\frac{\sqrt{5}}{2}|x|\eta\big(\frac{x}{|x|}\big)$ is a four dimensional minimal cone in $\mathbb{R}^7$ with an isolated singularity, called the \emph{Lawson-Osserman cone}. It was the first example of a singular Lipschitz minimal graph (\cite{LO}). This is in stark contrast to the codimension one case, in which all Lipschitz minimal graphs are smooth (\cite{G}). In \cite{HL}, it was shown that the Lawson-Osserman cone is coassociative.
\end{example}

A key fact is that the normal bundle of a coassociative submanifold is isometrically isomorphic to the space of anti-self dual $2$-forms, $\Lambda_{-}$, via the map $NM \rightarrow \Lambda_-(M)$ defined by $V \mapsto (\iota_V \omega_0)\lvert_M$, where $\iota_V$ denotes interior multiplication by $V$ (see \cite{M}). Given $V \in NM$, we will denote $\alpha_V := (\iota_V \omega_0)\lvert_M$ to be its associated anti-self dual $2$-form. Since the proof of the theorem in this section is similar to those of the previous section, we omit the repetitive details.

\begin{lemma}\label{neg1}
    Let $C$ be a coassociative cone in $\mathbb{R}^{7}$. Then there is no closed anti-self dual 2-form on $C$ with homogeneity $-1$. 
\end{lemma}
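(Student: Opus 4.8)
The plan is to reduce the claim to a first-order spectral problem on the $3$-dimensional link $\Sigma$ and then to contradict it using the geometry that coassociativity forces on $\Sigma$. The first, and easiest, point is that anti-self-duality supplies co-closedness for free: since $\dim C = 4$ and $\ast_C\beta = -\beta$ for an anti-self-dual $2$-form $\beta$, a closed $\beta$ satisfies $\delta_C\beta = -\ast_C d_C\ast_C\beta = \ast_C d_C\beta = 0$, so $\beta$ is in fact a harmonic $2$-form on $C$. Thus I only ever have to analyze the single equation $d_C\beta = 0$.

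Next I would separate variables on the metric cone $C = (0,\infty)\times\Sigma$. Anti-self-duality together with homogeneity $-1$ pins the form down to $\beta = dr\wedge\xi - r\,\ast_\Sigma\xi$ for a single $r$-independent $1$-form $\xi$ on $\Sigma$ (the radial part $\xi$ determining the spatial part $-\ast_\Sigma\xi$). Computing $d_C\beta$ and splitting it into its $dr\wedge\Lambda^2\Sigma$ and $\Lambda^3\Sigma$ components, the equation $d_C\beta = 0$ becomes the system $\delta_\Sigma\xi = 0$ and $\ast_\Sigma d_\Sigma\xi = -\xi$; that is, $\xi$ is a co-closed eigenform of the curl operator $\ast_\Sigma d_\Sigma$ with eigenvalue $-1$, equivalently $\Delta_\Sigma\xi = \xi$ for the Hodge Laplacian on $\Sigma$. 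This is in the same spirit as the reduction behind Corollary \ref{coro}, one degree higher, via the structure theorem for homogeneous forms on cones.

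The decisive difference from the special Lagrangian case is that the resulting eigenvalue is $+1$ rather than a negative number, so it is \emph{not} excluded by the mere positivity of the Hodge Laplacian; ruling it out is the main obstacle, and it is exactly here that coassociativity must enter. The route I would take is to exploit that $\Sigma\subset\mathbb{S}^6$ is the minimal $3$-fold underlying a coassociative cone — a minimal Lagrangian (special Legendrian) submanifold of the nearly K\"ahler $\mathbb{S}^6$ — and to combine the Bochner--Weitzenb\"ock identity $\Delta_\Sigma\xi = \nabla^\ast\nabla\xi + \mathrm{Ric}_\Sigma(\xi)$ with the constraints that coassociativity places on the second fundamental form of $\Sigma$, so as to push the bottom of the curl spectrum on co-closed $1$-forms strictly off $\pm 1$. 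Equivalently, under the isomorphism $N\Sigma\cong T\Sigma$ induced by $J$ one may rewrite the stability operator $L_\Sigma$ in terms of this curl operator and reread the obstruction as the statement that $\mu_1 = -(n-2)^2/4 = -1$ cannot occur, which by Proposition \ref{strict} is precisely strict stability. I expect this spectral gap on the link — which must hold even for non-round links such as that of the Lawson--Osserman cone in Example \ref{LOcone} — to be the crux of the argument; the first two steps are formal.
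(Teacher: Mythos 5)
Your reduction to the link is correct and coincides exactly with the paper's: writing the form as $dr\wedge\eta+r\omega$, anti-self-duality forces $\omega=-*_\Sigma\eta$, closedness forces $\omega=d_\Sigma\eta$, and everything comes down to a coclosed $1$-form on the $3$-dimensional link satisfying $*_\Sigma d_\Sigma\eta=-\eta$, hence $\Delta_\Sigma\eta=\eta$. But your argument stops exactly where the real work begins. The entire content of the lemma is that the link of a coassociative cone carries no curl eigenfield with eigenvalue $-1$, and you only propose a strategy for this (Bochner--Weitzenb\"ock plus constraints on $A_\Sigma$ coming from coassociativity) without executing it. Nothing you wrote uses coassociativity: the two ``formal'' steps are valid on any $4$-dimensional metric cone, and it is not at all clear -- and certainly not proved by you -- that the curl spectrum of a Lagrangian link in the nearly K\"ahler $\mathbb{S}^6$ avoids $-1$. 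As a proof the proposal is therefore incomplete; the missing step is the crux, not a routine verification.

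That said, you have put your finger on a genuine defect in the paper's own argument. The paper concludes from the same relations that $\Delta_\Sigma\eta=-\eta$ and invokes nonnegativity of the Hodge Laplacian; your sign is the correct one. Indeed, since $\delta_\Sigma\eta=0$ and $d_\Sigma\eta=-*_\Sigma\eta$,
\[
\int_\Sigma\langle\Delta_\Sigma\eta,\eta\rangle=\|d_\Sigma\eta\|_{L^2}^2+\|\delta_\Sigma\eta\|_{L^2}^2=\|*_\Sigma\eta\|_{L^2}^2=\|\eta\|_{L^2}^2,
\]
so any nonzero solution is an eigenform with eigenvalue $+1$, never $-1$; this identity is convention-independent, whereas the paper's computation applies $\delta=-*d*$ to a $2$-form on a $3$-manifold, where the adjoint of $d$ is $+*d*$ (test it on $f\,dx\wedge dy$ in $\mathbb{R}^3$). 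Consequently nonnegativity of the Hodge Laplacian does not close the argument, and the spectral-gap statement you identify as the main obstacle really is one: neither your proposal nor the paper's proof, as written, establishes the lemma. To salvage it one must genuinely rule out the eigenvalue equation $*_\Sigma d_\Sigma\eta=-\eta$ on links of coassociative cones, for instance by the Weitzenb\"ock-type analysis you sketch, carried out in full.
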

\begin{proof}
    If $\alpha$ is a $2$-form on $C$ with homogeneity $-1$, we can write $\alpha=dr \wedge \eta + r \omega$ for some $1$-form $\eta$ on $\Sigma$ and a 2-form $\omega$ on $\Sigma$. Then, we have
    \begin{align*}
        d\alpha &= -dr\wedge d_{\Sigma} \eta + dr \wedge \omega + r d_{\Sigma} \omega, \\
        *\alpha &= dr \wedge *_{\Sigma} \omega + r*_{\Sigma} \eta.
    \end{align*}        
        From the anti-self duality, we have $*_{\Sigma}\eta=-\omega$. Since $\alpha$ is closed, we have $\omega = d_{\Sigma} \eta$. Thus, $d_{\Sigma} \eta = -*_{\Sigma} \eta$.
    
    Now, we compute the Hodge Laplacian of the 1-form $\eta$ on $\Sigma$
    \begin{align*}
        \Delta_{\Sigma} \eta &= d\delta \eta + \delta d \eta \\
                             &= -d*d* \eta -*d*d \eta \\
                             &= -d*d(-d \eta) -*d*(-* \eta) \\
                             &= *d \eta \\
                             &= -\eta.
    \end{align*}
    This contradicts that the spectrum of the Hodge Laplacian is nonnegative.
    \end{proof}

Using Lemma \ref{neg1}, we can argue as in Theorem \ref{Lthm} to prove the strict stability of coassociative cones.    
\begin{theorem}\label{coassoc}
    Every coassociative cone in $\mathbb{R}^{7}$ is strictly stable.
\end{theorem}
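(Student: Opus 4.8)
The plan is to mirror the argument of Theorem \ref{Lthm}, replacing the special Lagrangian second variation identity with its coassociative analogue and invoking Lemma \ref{neg1} in place of Corollary \ref{coro}. Since a coassociative cone is calibrated, it is area-minimizing and hence stable, so Proposition \ref{strict} applies with $n=4$: it suffices to rule out a Jacobi field of homogeneity $(2-4)/2 = -1$. Arguing by contradiction, I would suppose $V \in \Gamma(NC)$ is such a Jacobi field; homogeneity $-1$ forces the pointwise bound $|V|^2 \le K r^{-2}$ for some constant $K > 0$.

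First I would run the logarithmic cutoff estimate exactly as in Theorem \ref{Lthm}. With $\varphi$ the cutoff supported in $\{e^{-2N} \le |x| \le e^{2N}\}$, integration by parts together with $LV = \Delta^\perp V + \tilde{A}(V) = 0$ gives $Q(\varphi V, \varphi V) = \int_C |\nabla \varphi|^2 |V|^2$. Feeding in $|V|^2 \le K r^{-2}$ and applying the coarea formula over the two annuli where $|\nabla \varphi| = 1/N$ yields $Q(\varphi V,\varphi V) \le \tfrac{2K}{N}\,\mathscr{H}^{3}(\Sigma)$, which tends to $0$ as $N \to \infty$. This half of the argument is formally identical to the special Lagrangian case and needs no new input.

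The new ingredient is the lower bound supplied by the coassociative structure. Using the bundle isometry $NC \to \Lambda_-(C)$, $V \mapsto \alpha_V = (\iota_V \omega_0)\lvert_C$, I would invoke the second variation formula for coassociative submanifolds (the analogue of the identity from \cite{M} used above), which expresses $Q(V,V)$ as $\int_C |d\alpha_V|^2 + |\delta \alpha_V|^2$. Because multiplication by a function preserves anti-self-duality, $\varphi \alpha_V$ is again anti-self-dual, whence
\[
Q(\varphi V,\varphi V) = \int_C |d(\varphi \alpha_V)|^2 + |\delta(\varphi \alpha_V)|^2 \ge \int_{C \cap \{e^{-N} < |x| < e^N\}} \big(|d\alpha_V|^2 + |\delta \alpha_V|^2\big),
\]
since $\varphi \equiv 1$ on the middle annulus. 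Letting $N \to \infty$ and combining monotone convergence with the vanishing of the upper bound forces $d\alpha_V = 0$ (and then, by anti-self-duality, $\delta \alpha_V = *\,d*\alpha_V = \mp *\,d\alpha_V = 0$ as well). Since $\omega_0$ has constant coefficients, the correspondence $V \mapsto \alpha_V$ preserves homogeneity, so $\alpha_V$ is a closed anti-self-dual $2$-form on $C$ of homogeneity $-1$, contradicting Lemma \ref{neg1}. The contradiction shows that $C$ is strictly stable.

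The main obstacle is the coassociative second variation identity in the precise form $Q(V,V) = \int_C |d\alpha_V|^2 + |\delta \alpha_V|^2$, together with the verification that $V \mapsto \alpha_V$ carries homogeneity $-1$ to a $2$-form of homogeneity $-1$ in the sense used by Lemma \ref{neg1}; this is where the special geometry genuinely enters, and everything else is a transcription of Theorem \ref{Lthm}. A secondary point to confirm is the legitimacy of the monotone convergence step, namely that $|d\alpha_V|^2 + |\delta \alpha_V|^2$ is integrable on the truncated cones and that the truncated integrals increase to the full integral as $N \to \infty$; this follows from the smoothness of $\alpha_V$ away from the vertex and its $-1$ homogeneity.
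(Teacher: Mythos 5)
Your proposal is correct and follows essentially the same route as the paper, which likewise invokes Proposition \ref{strict}, runs the logarithmic cutoff computation of Theorem \ref{Lthm} verbatim, applies McLean's coassociative second variation formula, and concludes $d\alpha_V = 0$ against Lemma \ref{neg1}. The only cosmetic difference is that you write the second variation as $\int_C |d\alpha_V|^2 + |\delta\alpha_V|^2$; for an anti-self-dual $2$-form on a $4$-fold one has $|\delta\alpha_V| = |d\alpha_V|$ pointwise, so this agrees with the usual statement $Q(V,V)=\int_C |d\alpha_V|^2$ up to a harmless factor and the conclusion is unaffected.
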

\begin{proof}
    Suppose that $C$ is not strictly stable. Then Proposition \ref{strict} gives the existence of a Jacobi field $V$ of homogeneity $-1$ satisfying $|V|^2 \leq Kr^{-2}$ for some $K > 0$. Arguing as in Theorem \ref{Lthm}, using the second variation formula for coassociative submanifolds instead (\cite{M}, Theorem 4.9), we find that $d\alpha_V = 0$ contradicting Lemma \ref{neg1}.
\end{proof}

\bibliographystyle{amsplain}

\begin{thebibliography}{10}

\bibitem{B} E. Bombieri, E. De Giorgi, E. Giusti. Minimal cones and the Bernstein problem. \emph{Invent. Math.} \textbf{7} (1969), 243-268.

\bibitem{CHS} L. Caffarelli, R. Hardt, L. Simon. Minimal surfaces with isolated singularities. \emph{Manuscripta Math}. \textbf{48} (1984), no. 1-3, 1–18.

\bibitem{D2} B. Dimler. Minimal submanifolds with multiple isolated singularities. Preprint 2024, arXiv:2409.20327.

\bibitem{FHN} L. Foscolo, M. Haskins, J. Nordström. Complete noncompact G2-manifolds from asymptotically conical Calabi–Yau 3-folds, \emph{Duke Math. J.} \textbf{170} (2021), no. 15, 3323–3416. 

\bibitem{G} E. Giusti. {\it Minimal Surfaces and Functions of Bounded Variation}, Monogr. Math., 80  Birkh\"auser Verlag, Basel, 1984.

\bibitem{HS} R. Hardt, L. Simon. Area-minimizing hypersurfaces with isolated singularities. \emph{J. Reine Angew. Math.} \textbf{362} (1985), 102-129.

\bibitem{HL} R. Harvey, H.B. Lawson. Calibrated geometries. \emph{Acta Math.} \textbf{148} (1982), 47-157.

 \bibitem{Ha1} M. Haskins. Special Lagrangian cones. \emph{Am. J. Math.} \textbf{126} (2004), no. 4, 845-871.

 \bibitem{Ha3} M. Haskins. The geometric complexity of special Lagrangian $T^2$-cones. \emph{Invent. Math.} \textbf{157} (2004), no. 1, 11–70.

 \bibitem{Ha2} M. Haskins, N. Kapouleas. Special Lagrangian cones with higher genus links. \emph{Invent. Math.} \textbf{167} (2007), 223-294.

 \bibitem{Ha4} M. Haskins, N. Kapouleas. Closed twisted products and $SO(p) \times SO(q)$-invariant special Lagrangian cones. \emph{Comm. Anal. Geom.} \textbf{20} (2012), no. 1, 95-162.

 \bibitem{Jo1} D.D. Joyce. Special Lagrangian $m$-folds in $\mathbb{C}^m$ with symmetries. \emph{Duke Math J.} \textbf{115} (2002), 1-51.

 \bibitem{Jo2} D.D. Joyce. Special Lagrangian cones with isolated conical singularities. V. Survey and applications. \emph{J. Differential Geom.} \textbf{63} (2003), 279-347.

\bibitem{Law} H.B. Lawson. The equivariant Plateau problem and interior regularity. \emph{Trans. Amer. Math. Soc.} \textbf{173} (1972), 231-249.

\bibitem{LO} H.B. Lawson, R. Osserman. Non-existence, non-uniqueness, and irregularity of solutions to the minimal surface system. {\it Acta Math.} {\bf 139} (1977), no. 1-2, 1-17.

\bibitem{Li} F.H. Lin. Minimality and stability of minimal hypersurfaces in $\mathbb{R}^n$. \emph{Bull. Austral. Math. Soc.} \textbf{36} (1987), 209-214.

\bibitem{L3} J.D. Lotay. Coassociative $4$-folds with conical singularities. \emph{Comm. Anal. Geom.} \textbf{15} (2007), no. 5, 891-946.

\bibitem{L2} J.D. Lotay. Stability of coassociative conical singularities. \emph{Comm. Anal. Geom.} \textbf{20} (2012), no. 4, 803-867.

\bibitem{L} J.D. Lotay. Desingularization of coassociative $4$-folds with conical singularities: obstructions and applications. \emph{Trans. Am. Math. Soc.} (2104), no. 11, 6051-6092.

\bibitem{Mc} I. McIntosh. Special Lagrangian cones in $\mathbb{C}^3$ and primitive harmonic maps. \emph{J. London Math. Soc.} \textbf{67} (2003), 769-789.

\bibitem{M} R. McLean. Deformations of calibrated submanifolds. 
\emph{Comm. Anal. Geom.} \textbf{6} (1998), no. 4, 705–747.

\bibitem{Oh} Y, Oh. Second variation and stabilities of minimal Lagrangian submanifolds in Kähler manifolds. \emph{Invent. Math.} \textbf{101} (1990), no. 2, 501–519.

\bibitem{Ohn} Y. Ohnita. Stability and rigidity of special Lagrangian cones over certain minimal Legendrian orbits. \emph{Osaka J. Math.} \textbf{44} (2007), no. 2, 305–334.

\bibitem{O} R. Osserman. Minimal varieties. \emph{Bull. Amer. Math. Soc.} \textbf{75} (1969), 1092-1120.

\bibitem{SS} R. Schoen, L. Simon. Regularity of stable minimal hypersurfaces. \emph{Commun. Pure Appl. Math.} \textbf{34}, 741-797.

\bibitem{Se} P. Simoes. On a class of minimal cones in $\mathbb{R}^n$. \emph{Bull. Amer. Math. Soc.} \textbf{80} (1974), 488-489.

\bibitem{Sim} L. Simon. {\it Lectures on geometric measure theory}. Proceedings of the Centre for Mathematical Analysis, Australian National University, 3.  Australian National University, Centre for Mathematical Analysis, Canberra, 1983. 

\bibitem{Si} J. Simons. Minimal varieties in Riemannian manifolds. \emph{Ann. of Math.} \textbf{88} (1968), no. 1, 62-105.

\bibitem{Sm} N. Smale. Minimal hypersurfaces with many isolated singularities. \emph{Ann. of Math.} \textbf{130} (1989), no. 3, 603-642.

\bibitem{Sm1} N. Smale. An equivariant construction of minimal surfaces with nontrivial singular sets. \emph{Indiana Univ. Math. J.} \textbf{40} (1991), no. 2, 595-616.



\end{thebibliography}

\end{document}